\documentclass[a4paper,11pt]{article}
\textheight = 625pt
\textwidth = 470pt
\oddsidemargin = 0pt
\usepackage[ansinew]{inputenc}
\usepackage[english]{babel}
\usepackage{amssymb}
\usepackage{amsmath}
\usepackage{amsthm}
\usepackage[pdftex]{color, graphicx}
\usepackage{mathrsfs}
\usepackage{verbatim}
\usepackage{stmaryrd}

\theoremstyle{definition} \newtheorem{defi}{Definition}[section]

\theoremstyle{plain} \newtheorem{thm}[defi]{Theorem}

\theoremstyle{plain} \newtheorem{prop}[defi]{Proposition}

\theoremstyle{plain} 

\theoremstyle{plain} 

\theoremstyle{definition} 

\theoremstyle{plain} \newtheorem{conj}[defi]{Conjecture}

\newcommand{\lb}{\llbracket}
\newcommand{\rb}{\rrbracket}
\newcommand{\soc}{\operatorname{soc}}

\newcommand{\ind}{\operatorname{ind}}

\newcommand{\ZX}{\mathbb{Z}[X]^W}

\author{Tobias Kildetoft}
\title{A Conjectural Algorithm for Simple Characters of Algebraic Groups}
\frenchspacing

\begin{document}

\maketitle


\abstract

We describe an algorithm, which - given the characters of tilting modules and assuming that Donkin's tilting conjecture is true - computes the characters of simple modules for an algebraic group in any characteristic.

\section{Introduction}

Let $G$ be a simple, simply connected algebraic group over an algebraically closed field $k$ of characteristic $p > 0$.

In this paper, we will describe an algorithm that computes the characters of the simple $G$-modules, given that we know the characters of the tilting modules and further assuming that Donkin's tilting conjecture is true.

Let $T\leq G$ be a maximal torus with associated root system $R$, simple roots $S$ and positive roots $R^+$. Let $X_+$ be the set of dominant weights and $B$ be the Borel subgroup corresponding to the negative roots.

For $\lambda\in X_+$ we get a $1$-dimensional $B$-module also denoted $\lambda$. Define $\nabla(\lambda) = \operatorname{ind}_B^G(\lambda)$, which is called the costandard module with highest weight $\lambda$. These have known characters, given by Weyl's character formula. The socle of $\nabla(\lambda)$ is simple and will be denoted $L(\lambda)$. These provide a non-redundant list of all simple $G$-modules.

A $G$-module is said to have a good filtration if it has a filtration with factors of the form $\nabla(\lambda)$ for suitable $\lambda$, and it is said to be tilting if both the module and its dual have good filtrations.

For each $\lambda\in X_+$ there is a unique indecomposable tilting module, all of whose weights $\mu$ satisfy $\mu\leq \lambda$ and which has $\nabla(\lambda)$ as the top factor in its good filtration. This will be denoted by $T(\lambda)$.

The algorithm will proceed in several steps. For the sake of clarity, we first describe the last part.

The final step will compute the characters of simple modules from the composition multiplicities in costandard modules. This step is not new, but a well-known idea.

Given the final step, the goal of the algorithm is to compute the composition multiplicities in costandard modules. This is done in two steps.

First, the character of the costandard module is written in what we call the $(p,r)$-basis. The coefficients arising from this are then calculated in terms of the multiplicities of costandard modules in tilting modules, under the assumption that Donkin's tilting conjecture is true. The calculation is recursive and assumes that all of these coefficients have been computed for smaller weights.

Next, the composition multiplicities in the modules from the $(p,r)$-basis are calculated. This calculation is again recursive and requires knowing composition multiplicities in costandard modules with (much) smaller weights.

Putting the two steps above together gives the composition multiplicities in costandard modules.

The algorithm involves choosing a natural number $r$. The choice of this $r$ will be dealt with later, but it is worth noting that if it is chosen large enough relative to the modules considered, most of the algorithm ``collapses'' to a single step, which is the same as one would do when $p\geq 2h-2$ where $h$ is the Coxeter number of the group. However, the tilting modules one needs to know the characters of become considerably more complicated as $r$ increases.

\section{Notation and Preliminaries}

From now on, we fix the following notation. We refer to \cite{rags} for further details on algebraic groups and their representations. For convenience, from now on, by ``$G$-module'' we will mean a finite dimensional and rational $G$-module.

\begin{itemize}
\item $k$ is an algebraically closed field of characteristic $p>0$.
\item $G$ is a semisimple, connected, simply connected algebraic group scheme over $k$, defined over $\mathbb{F}_p$.
\item $T\leq G$ is a maximal split torus.
\item $X = X(T)$ is the group of characters of $T$.
\item $R$ is the associated root system.
\item $S$ is a fixed basis of $R$.
\item $R^+$ is the set of positive roots corresponding to $S$.
\item $\alpha^{\vee}$ is the coroot associated to $\alpha\in R$.
\item $\langle\beta,\alpha^{\vee}\rangle$ is the natural pairing normalized such that $\langle\alpha,\alpha^{\vee}\rangle = 2$ for all $\alpha\in S$.
\item $\alpha_0$ is the highest short root of $R^+$.
\item $\rho = \frac{1}{2}\sum_{\alpha\in R^+}\alpha$ is the Weyl weight.
\item $h = \langle\rho,\alpha_0^{\vee}\rangle + 1$ is the Coxeter number of $R$.
\item $X_+ = \{\lambda\in X\mid \langle\lambda,\alpha^{\vee}\rangle \geq 0\mbox{ for all }\alpha\in R^+\}$ is the set of dominant weights.
\item $X_r = \{\lambda\in X_+\mid \langle\lambda,\alpha^{\vee}\rangle < p^r\mbox{ for all }\alpha\in S\}$ is the set of $r$-restricted weights for some integer $r\geq 1$.
\item $\leq$ is the partial order on $X$ defined by $\lambda\leq \mu$ iff $\mu - \lambda$ is a non-negative integral linear combination of positive roots.
\item $B\leq G$ is the Borel subgroup containing $T$ corresponding to the negative roots.
\item $W$ is the Weyl group of $R$.
\item $w_0\in W$ is the longest element.
\item $\lambda^* = -w_0(\lambda)$ is the dual weight of a weight $\lambda\in X$.
\item $\nabla(\lambda) = \ind_B^G(\lambda)$ is the costandard module of highest weight $\lambda$ for $\lambda\in X_+$.
\item $L(\lambda) = \soc_G\nabla(\lambda)$ is the simple module with highest weight $\lambda\in X_+$.
\item $T(\lambda)$ is the indecomposable tilting module with highest weight $\lambda$.
\item $M_{\lambda} = \{m\in M\mid t.m = \lambda(t)m\mbox{ for all }t\in T\}$ is the $\lambda$-weight space of the $G$-module $M$ for $\lambda\in X$.
\item $\mathbb{Z}[X]$ is the integral group ring of $X$ with basis $e(\lambda)$, $\lambda\in X$ such that $e(\lambda)e(\mu) = e(\lambda+\mu)$.
\item $\ZX$ is the set of $W$-fixed points of $\mathbb{Z}[X]$.
\item $[M] = \sum_{\lambda\in X}\dim(M_{\lambda})e(\lambda)\in \ZX$ is the character of the $G$-module $M$.
\item $F: G\to G$ is the Frobenius morphism which arises from the map $k\to k$ given by $x\mapsto x^p$.
\item $M^{(r)}$ is the $G$-module which as an additive group is the same as the $G$-module $M$, but with $G$-action composed with $F^r$.
\item $[M:L(\lambda)]_G$ is the composition multiplicity of the simple $G$-module $L(\lambda)$ in the $G$-module $M$.
\item $[M:\nabla(\lambda)]_{\nabla}$ is the multiplicity of $\nabla(\lambda)$ in a good filtration of the $G$-module $M$.
\item $\lb M,N\rb$ is the Euler characteristic of the modules $M$ and $N$, see \cite{kildetoft17} for more details as well as an alternative definition.
\end{itemize}

The following theorem is known as Steinberg's tensor product theorem. It will be used extensively in this paper, and will therefore just be referred to as such.

\begin{thm}[{\cite[Theorem 1.1]{steinberg63}},{\cite[Proposition II.3.16]{rags}}]
Let $\lambda\in X_+$ and write $\lambda = \lambda_0 + p^r\lambda_1$ with $\lambda_0\in X_r$.

Then $L(\lambda) \cong L(\lambda_0)\otimes L(\lambda_1)^{(r)}$.
\end{thm}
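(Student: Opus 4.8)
The plan is to prove this by the standard route through infinitesimal subgroups. First I would bring in the Frobenius kernels $G_r = \ker(F^r\colon G\to G)$: these are normal subgroup schemes of $G$ with $G/G_r\cong G^{(r)}$, and two facts about them carry the argument. First, for any $G$-module $V$ the twist $V^{(r)}$ is trivial over $G_r$, since the $G$-action on it factors through $F^r$. Second --- the substantive input --- for $\lambda_0\in X_r$ the module $L(\lambda_0)$ remains irreducible on restriction to $G_r$ (equivalently to $G_rT$). Granting these, set $M = L(\lambda_0)\otimes L(\lambda_1)^{(r)}$; the goal is then to show $M\cong L(\lambda)$.

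The weight bookkeeping is routine. The weights of $M$ are the sums $\mu + p^r\nu$ with $\mu$ a weight of $L(\lambda_0)$ and $\nu$ a weight of $L(\lambda_1)$; if such a sum equals $\lambda = \lambda_0 + p^r\lambda_1$ then $\mu - \lambda_0 = p^r(\lambda_1-\nu)$ is simultaneously $\leq 0$ and $\geq 0$, forcing $\mu=\lambda_0$ and $\nu=\lambda_1$. Hence $\lambda$ is the unique maximal weight of $M$ and $\dim M_\lambda = \dim L(\lambda_0)_{\lambda_0}\cdot\dim L(\lambda_1)_{\lambda_1} = 1$. In particular $L(\lambda)$ occurs in $M$ with composition multiplicity one and every other composition factor $L(\mu)$ has $\mu<\lambda$, so it suffices to prove that $M$ is simple.

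For the simplicity of $M$ I would work with $G_r$-isotypic components. By the two facts above, $M|_{G_r}\cong (L(\lambda_0)|_{G_r})^{\oplus\dim L(\lambda_1)}$ is semisimple and $L(\lambda_0)$-isotypic, and $\operatorname{End}_{G_r}(L(\lambda_0)) = k$ by Schur's lemma. The canonical map $\operatorname{End}_{G_r}(L(\lambda_0))\otimes L(\lambda_1)^{(r)}\to\Hom_{G_r}(L(\lambda_0),M)$ is then an isomorphism of $G$-modules (here one uses that $L(\lambda_1)^{(r)}$ is finite dimensional and trivial over $G_r$), so $\Hom_{G_r}(L(\lambda_0),M)\cong L(\lambda_1)^{(r)}$, a simple $G$-module. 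Now if $0\neq N\subseteq M$ is a $G$-submodule, then $N|_{G_r}$ is a nonzero submodule of the $L(\lambda_0)$-isotypic module $M|_{G_r}$, so $\Hom_{G_r}(L(\lambda_0),N)$ is a nonzero $G$-submodule of $\Hom_{G_r}(L(\lambda_0),M)\cong L(\lambda_1)^{(r)}$, hence equal to it; comparing dimensions yields $\dim N = \dim M$, i.e. $N = M$. Thus $M$ is simple with highest weight $\lambda$, so $M\cong L(\lambda)$. Iterating over the base-$p$ digits of $\lambda_1$ then recovers the full tensor product expansion.

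The genuine obstacle is the irreducibility of $L(\lambda_0)|_{G_r}$ for $\lambda_0\in X_r$; everything else is bookkeeping. I would obtain it via the algebra of distributions $\operatorname{Dist}(G_r)$: one first notes that $\soc_{G_r}L(\lambda_0)$, being $G$-stable, is all of $L(\lambda_0)$, so $L(\lambda_0)|_{G_r}$ is at least semisimple; then $r$-restrictedness of $\lambda_0$ is what lets one check that the $G_r$-submodule generated by a highest weight vector is already all of $L(\lambda_0)$ and is simple over $G_r$, via an analysis of highest weight vectors for the infinitesimal Borel $G_rB$ (or a contravariant-form argument). Alternatively one may simply cite the classification of simple $G_rT$-modules, for which this is standard. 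The delicacy --- and where I expect most of the effort to lie --- is that highest weight theory for $G_rB$ is subtler than for $B$ itself.
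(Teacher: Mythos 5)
The paper offers no proof of this statement at all---it is quoted as a known classical theorem, with references to Steinberg and to Jantzen's book, Proposition II.3.16. Your argument is exactly the standard proof found at that reference (the Cline--Parshall--Scott route via Frobenius kernels, the weight computation at the top, and the $G_r$-isotypic component argument for simplicity), and it is correct modulo the one substantive input you explicitly flag, namely Curtis's theorem that $L(\lambda_0)$ restricted to $G_r$ stays irreducible for $\lambda_0\in X_r$, which is legitimately cited rather than reproved.
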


We will make extensive use of the form denoted by $\lb\cdot,\cdot\rb$ in \cite{kildetoft17}. Apart from the properties of this form we discuss as Conjecture \ref{conjecture}, the main properties we will need are:

\begin{prop}[{\cite[Proposition 5.1.1(3)]{kildetoft17}}]\label{formproperty}
If $M$ has a good filtration and $\lambda\in X_+$ then $\lb M,\nabla(\lambda)\rb = [M:\nabla(\lambda)]_{\nabla}$.
\end{prop}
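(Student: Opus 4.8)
The plan is to exploit that $\lb\cdot,\cdot\rb$ is an Euler characteristic, hence additive on short exact sequences in each variable, together with the $\Ext$-orthogonality between Weyl and dual Weyl modules. First I would record the structural facts established for the form in \cite{kildetoft17} (bilinearity and additivity on short exact sequences in each slot), so that $\lb\cdot,\cdot\rb$ factors through the Grothendieck group of finite-dimensional $G$-modules in both arguments. Since the characters $[L(\lambda)]$ — equivalently the $[\nabla(\lambda)]$ — for $\lambda\in X_+$ form a $\Z$-basis of $\ZX$, this says precisely that $\lb M,N\rb$ depends only on the characters $[M]$ and $[N]$.

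Next I would evaluate the form on costandard modules. Write $\Delta(\mu) := \nabla(\mu^*)^*$ for the Weyl module of highest weight $\mu$. The classical vanishing $\Ext^i_G(\Delta(\mu),\nabla(\lambda)) = 0$ for $i>0$, together with $\Hom_G(\Delta(\mu),\nabla(\lambda)) = k$ if $\mu=\lambda$ and $0$ otherwise (see \cite{rags}), gives $\lb\Delta(\mu),\nabla(\lambda)\rb = \delta_{\mu\lambda}$. Because $\Delta(\mu)$ and $\nabla(\mu)$ have the same character and the form only sees characters, it follows that $\lb\nabla(\mu),\nabla(\lambda)\rb = \delta_{\mu\lambda}$.

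Finally, if $M$ has a good filtration then comparing characters gives $[M] = \sum_{\mu\in X_+}[M:\nabla(\mu)]_{\nabla}\,[\nabla(\mu)]$ in $\ZX$, so
\[
\lb M,\nabla(\lambda)\rb \;=\; \sum_{\mu\in X_+}[M:\nabla(\mu)]_{\nabla}\,\lb\nabla(\mu),\nabla(\lambda)\rb \;=\; [M:\nabla(\lambda)]_{\nabla},
\]
which is the claim. One can avoid characters and instead induct on the length of a good filtration of $M$: split off a costandard submodule $\nabla(\mu)$, note that the quotient again has a good filtration, and apply additivity of both sides to the resulting short exact sequence (using that good-filtration multiplicities are well defined and additive), the base case $M=\nabla(\mu)$ being the previous paragraph.

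The only genuinely non-formal ingredient is the well-definedness of $\lb\cdot,\cdot\rb$, i.e.\ that the alternating sums $\sum_i(-1)^i\dim\Ext^i_G(M,N)$ are finite and additive on all finite-dimensional modules (this is where the alternative, character-theoretic description of the form in \cite{kildetoft17} is the convenient one to work with); once that and the orthogonality $\lb\Delta(\mu),\nabla(\lambda)\rb=\delta_{\mu\lambda}$ are granted, the proposition is just a change of basis in $\ZX$, so I foresee no real obstacle at this stage.
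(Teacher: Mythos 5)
The paper does not prove Proposition~\ref{formproperty}; it is imported verbatim as a citation of \cite[Proposition 5.1.1(3)]{kildetoft17}, so there is no in-text argument to measure you against. Your proof is correct and is the standard one: from $\Ext^{i}_{G}(\Delta(\mu),\nabla(\lambda))=0$ for $i>0$ and $\Hom_{G}(\Delta(\mu),\nabla(\lambda))=\delta_{\mu\lambda}k$ you get $\lb\Delta(\mu),\nabla(\lambda)\rb=\delta_{\mu\lambda}$, then $[\Delta(\mu)]=[\nabla(\mu)]$ and the fact that $\lb\cdot,\cdot\rb$ only sees characters give $\lb\nabla(\mu),\nabla(\lambda)\rb=\delta_{\mu\lambda}$, and expanding $[M]$ in the basis $\{[\nabla(\mu)]\}_{\mu\in X_+}$ finishes it. You also correctly flag the one genuinely non-formal point, namely that the alternating sum of $\Ext$-dimensions is not obviously well defined for arbitrary finite-dimensional rational $G$-modules, and you correctly defer this to the alternative character-theoretic definition of the form in \cite{kildetoft17}, which is exactly what the present paper leans on when it later asserts that $\lb\cdot,\cdot\rb$ depends only on characters. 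The inductive variant you sketch also works, provided one peels off the costandard submodule at the bottom layer of a chosen good filtration so that the quotient visibly again has a good filtration. In short, no gap.
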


\section{A Basis for the Characters}

For $\lambda = \lambda_0 + p^r\lambda_1\in X_+$ with $\lambda_0\in X_r$ we define $\nabla^{(p,r)}(\lambda) = L(\lambda_0)\otimes\nabla(\lambda_1)^{(r)}$.

\begin{prop}\label{prbasis}
The set $\{[\nabla^{(p,r)}(\lambda)]\mid\lambda\in X_+\}$ is a $\mathbb{Z}$-basis for $\mathbb{Z}X^W$.
\end{prop}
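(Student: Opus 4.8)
The plan is to exhibit the change-of-basis matrix between $\{[\nabla^{(p,r)}(\lambda)]\}$ and the standard basis $\{[\nabla(\lambda)]\}$ of $\mathbb{Z}X^W$ (the latter being a basis by Weyl's character formula) and to show it is unitriangular with respect to the dominance order $\leq$. The key input is Steinberg's tensor product theorem: writing $\lambda = \lambda_0 + p^r\lambda_1$ with $\lambda_0 \in X_r$, one has $[\nabla^{(p,r)}(\lambda)] = [L(\lambda_0)][\nabla(\lambda_1)^{(r)}]$, and the highest weight appearing in this product is $\lambda_0 + p^r\lambda_1 = \lambda$, occurring with multiplicity one, while all other weights $\mu$ that appear satisfy $\mu < \lambda$. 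Indeed, the weights of $L(\lambda_0)$ are $\leq \lambda_0$ with $\lambda_0$ occurring once, and the weights of $\nabla(\lambda_1)^{(r)}$ are $\leq p^r\lambda_1$ with $p^r\lambda_1$ occurring once, so the product of the two characters has the stated form.

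First I would record that $\{[\nabla(\lambda)] \mid \lambda \in X_+\}$ is a $\mathbb{Z}$-basis for $\mathbb{Z}X^W$: each $[\nabla(\lambda)]$ is $e(\lambda)$ plus a $\mathbb{Z}$-combination of $e(\mu)$ with $\mu < \lambda$ (summed over $W$-orbits), so the transition matrix from the orbit-sum basis of $\mathbb{Z}X^W$ is unitriangular. Next, by the paragraph above, each $[\nabla^{(p,r)}(\lambda)]$ equals $[\nabla(\lambda)]$ plus a $\mathbb{Z}$-linear combination of $[\nabla(\mu)]$ with $\mu < \lambda$; here I use that any $W$-invariant element of $\mathbb{Z}X$ supported on weights $\leq \lambda$ can be written in the $[\nabla(\mu)]$-basis using only $\mu \leq \lambda$, which follows from the unitriangularity just established by downward induction on the dominance order.

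Finally I would invoke the standard fact that a transition matrix which is unitriangular with respect to a locally finite partial order — meaning for each $\lambda$ the set $\{\mu \mid \mu \leq \lambda,\ \mu \in X_+\}$ is finite, which holds here — is invertible over $\mathbb{Z}$, with inverse again of the same unitriangular shape. Hence $\{[\nabla^{(p,r)}(\lambda)] \mid \lambda \in X_+\}$ is a $\mathbb{Z}$-basis for $\mathbb{Z}X^W$.

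The only genuinely delicate point is the bookkeeping with the partial order: one must check that the map $\lambda \mapsto (\lambda_0,\lambda_1)$ does not disturb the triangularity — i.e. that tensoring $L(\lambda_0)$ with the $r$-th Frobenius twist of $\nabla(\lambda_1)$ cannot produce a weight that is $\not< \lambda$ other than $\lambda$ itself — but this is immediate from the weight estimates above since a sum $\mu_0 + p^r\mu_1$ with $\mu_0 \leq \lambda_0$, $\mu_1 \leq \lambda_1$ satisfies $\mu_0 + p^r\mu_1 \leq \lambda$, with equality only when $\mu_0 = \lambda_0$ and $\mu_1 = \lambda_1$. I expect no real obstacle beyond making this ordering argument precise.
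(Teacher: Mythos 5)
Your proof is correct, but it takes a route that differs from the paper's. The paper establishes spanning by starting from the known fact that the simple characters $\{[L(\lambda)]\}$ form a $\mathbb{Z}$-basis of $\ZX$ (\cite[Lemma 5.8]{rags}), applying Steinberg's theorem to factor $[L(\lambda)] = [L(\lambda_0)][L(\lambda_1)^{(r)}]$, and then expanding the twisted simple character $[L(\lambda_1)^{(r)}]$ in twisted costandard characters $[\nabla(\mu)^{(r)}]$ via \cite[Remark 5.8]{rags}; linear independence is then disposed of in one line via the unique highest weight. You instead compare $\{[\nabla^{(p,r)}(\lambda)]\}$ directly to the costandard basis $\{[\nabla(\lambda)]\}$ and prove the transition matrix is unitriangular for the dominance order, obtaining spanning and linear independence simultaneously from local finiteness of the order. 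Your version is more self-contained --- it does not need the cited facts about the simple-module basis or the expansion of Frobenius twists, only the Weyl-character basis and the weight estimate $\mu_0 + p^r\mu_1 \leq \lambda_0 + p^r\lambda_1$ with equality forcing $\mu_0 = \lambda_0$, $\mu_1 = \lambda_1$ --- while the paper's version is shorter on the page because it leans on standard references. Your "delicate point" at the end is exactly the one to check, and your argument for it (additivity of the dominance order under the decomposition, using that $p^r$ scales positive-root combinations to positive-root combinations) is correct; the one small thing worth making explicit is that the element $[\nabla^{(p,r)}(\lambda)]$ lies in $\ZX$ because it is the character of a $G$-module, so that the reduction to orbit sums applies.
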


\begin{proof}
By \cite[Lemma 5.8]{rags} if $\chi\in \mathbb{Z}X^W$ we can write $\chi = \sum_{\lambda\in X_+}a_{\lambda}[L(\lambda)]$ for suitable $a_{\lambda}\in\mathbb{Z}$. By Steinbergs tensor product theorem, this becomes $$\sum_{\lambda_0\in X_r}\sum_{\lambda_1\in X_+}a_{\lambda_0,\lambda_1}[L(\lambda_0)][L(\lambda_1)^{(r)}]$$ with $a_{\lambda_0,\lambda_1} = a_{\lambda_0+p^r\lambda_1}$.

Further, by \cite[Remark 5.8]{rags} we can write $[L(\lambda_1)^{(r)}] = \sum_{\mu\in X_+}b_{\lambda_1,\mu}[\nabla(\mu)^{(r)}]$ for suitable $b_{\lambda_1,\mu}\in\mathbb{Z}$. Combining these we get $$\chi = \sum_{\lambda_0\in X_r}\sum_{\lambda_1\in X_+}\sum_{\mu\in X_+}a_{\lambda_0,\lambda_1}b_{\lambda_1,\mu}[L(\lambda_0)][\nabla(\mu)^{(r)}]$$ $$= \sum_{\lambda_0\in X_r}\sum_{\lambda_1\in X_+}\sum_{\mu\in X_+}a_{\lambda_0,\lambda_1}b_{\lambda_1,\mu}[L(\lambda_0)\otimes\nabla(\mu)^{(r)}]$$ $$= \sum_{\lambda_0\in X_r}\sum_{\lambda_1\in X_+}\sum_{\mu\in X_+}a_{\lambda_0,\lambda_1}b_{\lambda_1,\mu}[\nabla^{(p,r)}(\lambda_0 + p^r\mu)]$$ so the $\mathbb{Z}$-span of the set is all of $\mathbb{Z}X^W$.

On the other hand, the set is clearly linearly independent since each $\nabla^{(p,r)}(\lambda)$ has $\lambda$ as its unique highest weight.
\end{proof}

By the above, for any $G$-module $M$ we can define integers $a_M^r(\lambda)$ by writing uniquely $[M] = \sum_{\lambda\in X_+}a_M^r(\lambda)[\nabla^{(p,r)}(\lambda)]$. In particular, we will define $a_{\mu}^r(\lambda) = a_{\nabla(\mu)}^r(\lambda)$.

We will refer to the basis $\{[\nabla^{(p,r)}(\lambda)]\mid\lambda\in X_+\}$ as the $(p,r)$-basis.

\section{Donkin's Tilting Conjecture}

The validity of the algorithm presented in this paper is contingent on the following conjecture. By \cite[Corollary 6.3.2]{kildetoft17} this is equivalent to assuming that Donkin's tilting conjecture holds.

\begin{conj}\label{conjecture}
For all $\lambda,\nu\in X_r$ and all $\sigma,\mu\in X_+$ $$\lb T(2(p^r-1)\rho - \lambda^* + p^r\sigma),L(\nu)\otimes \nabla(\mu)^{(r)}\rb = \begin{cases}[T(\sigma):\nabla(\mu)]_{\nabla} & \mbox{if } \nu = \lambda \\ 0 & \mbox{else}\end{cases}$$
\end{conj}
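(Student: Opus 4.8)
The plan is to deduce the identity from Donkin's tilting conjecture; by \cite[Corollary 6.3.2]{kildetoft17} the two are equivalent, and the argument below makes visible why. Write $G_r=\ker(F^r)$, and for $\lambda\in X_r$ let $\hat Q_r(\lambda)$ denote the injective hull of the simple $G_rT$-module of highest weight $\lambda$; it restricts to $G_r$ as the projective cover $Q_r(\lambda)$ of $L(\lambda)|_{G_r}$, and its highest weight is $2(p^r-1)\rho-\lambda^*$. Donkin's conjecture asserts that $T(2(p^r-1)\rho-\lambda^*)$ restricts to $\hat Q_r(\lambda)$ as a $G_rT$-module; a standard consequence (due to Donkin) is then the character identity
$$[T(2(p^r-1)\rho-\lambda^*+p^r\sigma)]=[\hat Q_r(\lambda)]\cdot[T(\sigma)]^{(r)}\qquad(\sigma\in X_+),$$
equivalently that $T(2(p^r-1)\rho-\lambda^*+p^r\sigma)$ has, as a $G_rT$-module, a filtration by the Frobenius-shifts of $\hat Q_r(\lambda)$ occurring in $\hat Q_r(\lambda)\otimes T(\sigma)^{(r)}$. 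As $\lb\cdot,\cdot\rb$ is additive on short exact sequences in each variable, it depends only on characters; so we may replace the tilting module in the statement by $\hat Q_r(\lambda)\otimes T(\sigma)^{(r)}$, and it suffices to compute $\lb\hat Q_r(\lambda)\otimes T(\sigma)^{(r)},\,L(\nu)\otimes\nabla(\mu)^{(r)}\rb$.

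Since $\hat Q_r(\lambda)|_{G_r}=Q_r(\lambda)$ is $G_r$-projective, so is $\hat Q_r(\lambda)\otimes T(\sigma)^{(r)}$, the second factor being $G_r$-trivial; hence $\Hom_{G_r}(\hat Q_r(\lambda)\otimes T(\sigma)^{(r)},-)$ is exact, and the Hochschild--Serre spectral sequence for $G_r\trianglelefteq G$ degenerates to give $\Ext^\bullet_G(\hat Q_r(\lambda)\otimes T(\sigma)^{(r)},N)\cong\Ext^\bullet_{G/G_r}(k,\Hom_{G_r}(\hat Q_r(\lambda)\otimes T(\sigma)^{(r)},N))$ for all $G$-modules $N$. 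With $N=L(\nu)\otimes\nabla(\mu)^{(r)}$, pulling the $G_r$-trivial twists out of the inner $\Hom$ yields $\Hom_{G_r}(\hat Q_r(\lambda)\otimes T(\sigma)^{(r)},N)\cong\Hom_{G_r}(\hat Q_r(\lambda),L(\nu))\otimes(T(\sigma)^*\otimes\nabla(\mu))^{(r)}$. The $G_r$-head of $\hat Q_r(\lambda)$ is the simple module of highest weight $\lambda$, so $\Hom_{G_r}(\hat Q_r(\lambda),L(\nu))=0$ for $\nu\neq\lambda$ --- this settles the case $\nu\neq\lambda$ --- and for $\nu=\lambda$ it is one-dimensional, hence trivial as a $G/G_r$-module (a semisimple group has no non-trivial one-dimensional modules). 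Identifying $G/G_r$ with $G$ via $F^r$ and passing to Euler characteristics, using the compatibility of $\lb\cdot,\cdot\rb$ with the Frobenius twist and with this reduction (formal properties recorded in \cite{kildetoft17}), we get
$$\lb\hat Q_r(\lambda)\otimes T(\sigma)^{(r)},\,L(\lambda)\otimes\nabla(\mu)^{(r)}\rb=\lb k,\,T(\sigma)^*\otimes\nabla(\mu)\rb.$$

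To evaluate the right-hand side, recall that a tensor product of modules with good filtrations has a good filtration (Mathieu's theorem), so in particular $T(\sigma)^*\otimes\nabla(\mu)$ does; hence, by Proposition \ref{formproperty} (and the symmetry of $\lb\cdot,\cdot\rb$), $\lb k,T(\sigma)^*\otimes\nabla(\mu)\rb=[T(\sigma)^*\otimes\nabla(\mu):\nabla(0)]_\nabla=\dim(T(\sigma)^*\otimes\nabla(\mu))^G=\dim\Hom_G(T(\sigma),\nabla(\mu))$. Filtering $T(\sigma)$ by Weyl modules and using $\Hom_G(\Delta(\xi),\nabla(\mu))=\delta_{\xi\mu}k$ and $\Ext^1_G(\Delta(\xi),\nabla(\mu))=0$, this last number is the multiplicity of $\Delta(\mu)$ in a Weyl filtration of $T(\sigma)$, which equals $[T(\sigma):\nabla(\mu)]_\nabla$ by the self-duality of tilting modules. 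This is precisely the asserted value.

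The main obstacle is the first step: extracting from the bare statement of Donkin's conjecture both the character identity $[T(2(p^r-1)\rho-\lambda^*+p^r\sigma)]=[\hat Q_r(\lambda)]\cdot[T(\sigma)]^{(r)}$ and the accompanying $G_rT$-structure of this tilting module, since that is where the whole force of the conjecture enters --- this is in essence \cite[Corollary 6.3.2]{kildetoft17}, so in practice one either cites that corollary or reproves it by tracking good filtrations inside $\hat Q_r(\lambda)\otimes T(\sigma)^{(r)}$. A secondary, purely formal point is to run the cohomological reduction with the specific form $\lb\cdot,\cdot\rb$ of \cite{kildetoft17} rather than with a naive alternating sum of $\Ext$-dimensions (which is not well-defined, $G$ having infinite cohomological dimension); the properties needed --- additivity, invariance under $F^r$, and compatibility with the degenerate Hochschild--Serre sequence for a $G_r$-projective first variable --- are exactly the ones that form is designed to satisfy. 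Granting these, the computation is uniform in $\sigma$ and requires no induction on weights.
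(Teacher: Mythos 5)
The statement you are treating is Conjecture~\ref{conjecture}, and the paper does not prove it. It is stated as a conjecture, declared equivalent to Donkin's tilting conjecture by citing \cite[Corollary 6.3.2]{kildetoft17}, noted to hold for $p\geq 2h-2$ by \cite{donkin93}, and then assumed for the remainder of the article. There is therefore no proof in the paper to compare yours against, and your argument is necessarily conditional: it derives the identity from Donkin's tilting conjecture, which is one direction of the equivalence that the paper outsources wholesale to the cited corollary.

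Taken on those terms, your reduction is sound and reproduces the mechanism one would expect that corollary to use. Two remarks on details. First, the character identity $[T(2(p^r-1)\rho-\lambda^*+p^r\sigma)]=[\hat Q_r(\lambda)]\cdot[T(\sigma)]^{(r)}$ is not simply a ``standard consequence'' of the bare $G_rT$-restriction statement; what is needed is that $T(2(p^r-1)\rho-\lambda^*)\otimes T(\sigma)^{(r)}$ is already indecomposable and hence equals $T(2(p^r-1)\rho-\lambda^*+p^r\sigma)$. That indecomposability is Donkin's theorem for $p\geq 2h-2$ and is a consequence of his conjecture in general, and --- as you yourself flag --- it is exactly where \cite[Corollary 6.3.2]{kildetoft17} does the work, so your argument is circular at that point unless you reprove it. Second, in the terminal computation the equality $[T(\sigma):\Delta(\mu)]_\Delta=[T(\sigma):\nabla(\mu)]_\nabla$ does not follow from ``self-duality'' in the sense of $T(\sigma)^*\cong T(\sigma)$ (that would require $\sigma^*=\sigma$); it comes from the contravariant transpose duality that fixes each $T(\sigma)$ and interchanges $\Delta(\mu)$ with $\nabla(\mu)$. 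With these corrections, the passage from $\lb\hat Q_r(\lambda)\otimes T(\sigma)^{(r)},L(\nu)\otimes\nabla(\mu)^{(r)}\rb$ via $G_r$-projectivity, the degenerate Lyndon--Hochschild--Serre spectral sequence, the identification of the $G_r$-head of $\hat Q_r(\lambda)$, and the final use of Mathieu's theorem together with Proposition~\ref{formproperty} is a correct conditional argument --- but it is additional content relative to the paper, which simply cites and assumes.
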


From now on, we will assume that Conjecture \ref{conjecture} is true. Note that this is known to be the case when $p\geq 2h-2$ by \cite{donkin93}.

\section{Computing Coefficients in the $(p,r)$-basis}

For $\lambda,\mu\in X_+$ the number $a_{\mu}^r(\lambda)$ can be calculated recursively using the following. To apply it, we need to assume that we have calculated all $a_{\mu}^r(\nu)$ for $\nu < \lambda$.

\begin{prop}\label{prcoeff}
Let $\lambda,\mu\in X_+$ where $\lambda = \lambda_0 + p^r\lambda_1$ with $\lambda_0\in X_r$. Then $$a_{\mu}^r(\lambda) = [T(2(p^r-1)\rho - \lambda_0^*+p^r\lambda_1):\nabla(\mu)]_{\nabla} - \sum_{\sigma\in X_+,\,\sigma < \lambda_1}a_{\mu}^r(\lambda_0 + p^r\sigma)[T(\lambda_1):\nabla(\sigma)]_{\nabla}$$ In particular, if $\lambda_1$ is minimal in $X_+$ then $$a_{\mu}^r(\lambda) = [T(2(p^r-1)\rho - \lambda_0^*+p^r\lambda_1):\nabla(\mu)]_{\nabla}$$
\end{prop}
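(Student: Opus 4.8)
The plan is to express the character $[\nabla(\mu)]$ in the $(p,r)$-basis and pair it against the relevant tilting module using Conjecture \ref{conjecture}. Concretely, I would start from the identity $[\nabla(\mu)] = \sum_{\nu\in X_+} a_\mu^r(\nu)[\nabla^{(p,r)}(\nu)]$, and group the sum according to the restricted part: writing $\nu = \nu_0 + p^r\nu_1$ with $\nu_0\in X_r$, we have $[\nabla^{(p,r)}(\nu)] = [L(\nu_0)\otimes\nabla(\nu_1)^{(r)}]$. The trick is to apply the bilinear form $\lb\cdot,\cdot\rb$ with the tilting module $T(2(p^r-1)\rho - \lambda_0^* + p^r\lambda_1)$ in the first argument. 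Since this tilting module has a good filtration and $\nabla(\mu)$ is costandard, Proposition \ref{formproperty} gives $\lb T(2(p^r-1)\rho - \lambda_0^* + p^r\lambda_1),\nabla(\mu)\rb = [T(2(p^r-1)\rho - \lambda_0^* + p^r\lambda_1):\nabla(\mu)]_\nabla$. On the other hand, expanding $[\nabla(\mu)]$ in the $(p,r)$-basis and using bilinearity of the form, the left-hand side becomes $\sum_{\nu_0\in X_r}\sum_{\nu_1\in X_+} a_\mu^r(\nu_0 + p^r\nu_1)\,\lb T(2(p^r-1)\rho - \lambda_0^* + p^r\lambda_1),L(\nu_0)\otimes\nabla(\nu_1)^{(r)}\rb$.

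Next I would invoke Conjecture \ref{conjecture} to evaluate each pairing in the sum. With $\lambda$ playing the role of $\lambda$, $\sigma = \lambda_1$, the conjecture says $\lb T(2(p^r-1)\rho - \lambda_0^* + p^r\lambda_1),L(\nu_0)\otimes\nabla(\nu_1)^{(r)}\rb$ equals $[T(\lambda_1):\nabla(\nu_1)]_\nabla$ when $\nu_0 = \lambda_0$ and vanishes otherwise. This collapses the double sum to $\sum_{\nu_1\in X_+} a_\mu^r(\lambda_0 + p^r\nu_1)[T(\lambda_1):\nabla(\nu_1)]_\nabla$. Equating the two expressions yields
\begin{equation*}
[T(2(p^r-1)\rho - \lambda_0^* + p^r\lambda_1):\nabla(\mu)]_\nabla = \sum_{\nu_1\in X_+} a_\mu^r(\lambda_0 + p^r\nu_1)[T(\lambda_1):\nabla(\nu_1)]_\nabla .
\end{equation*}
Now I would isolate the term $\nu_1 = \lambda_1$: since $T(\lambda_1)$ has $\nabla(\lambda_1)$ as the top factor of its good filtration with multiplicity one, $[T(\lambda_1):\nabla(\lambda_1)]_\nabla = 1$, and all other $\nu_1$ appearing satisfy $\nu_1 < \lambda_1$ because every weight of $T(\lambda_1)$ is $\leq\lambda_1$. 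Solving for $a_\mu^r(\lambda) = a_\mu^r(\lambda_0 + p^r\lambda_1)$ gives exactly the claimed recursion, and the final ``in particular'' clause is the special case where the sum over $\sigma < \lambda_1$ is empty.

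I should double-check one subtlety: the application of Conjecture \ref{conjecture} requires $\nu_1$ (and $\mu$) to range over $X_+$ and $\nu_0,\lambda_0$ over $X_r$, which matches the grouping of the $(p,r)$-basis exactly, so there is no mismatch of index sets; and I should confirm that only finitely many $\nu$ contribute, which holds since $[\nabla(\mu)]$ is a finite sum and the weights involved in $T(\lambda_1)$ form a finite set. The main obstacle, and the only place where real content enters beyond formal manipulation, is ensuring the pairing $\lb\cdot,\cdot\rb$ is genuinely bilinear on characters so that expanding $[\nabla(\mu)]$ term by term is legitimate, and that the tilting module $T(2(p^r-1)\rho - \lambda_0^* + p^r\lambda_1)$ indeed has a good filtration so Proposition \ref{formproperty} applies — both of which are standard, the latter being part of the definition of a tilting module. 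Everything else is bookkeeping with Steinberg's tensor product theorem implicit in the structure of the $(p,r)$-basis.
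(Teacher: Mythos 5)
Your argument is correct and is essentially the paper's own proof, just written out in more detail: the paper also expands $[\nabla(\mu)]$ in the $(p,r)$-basis, applies $\lb T(2(p^r-1)\rho - \lambda_0^* + p^r\lambda_1),\cdot\rb$ using that the form depends only on characters, invokes Conjecture \ref{conjecture} to collapse the sum, identifies the left-hand side via Proposition \ref{formproperty}, and isolates the $\sigma = \lambda_1$ term using $[T(\lambda_1):\nabla(\lambda_1)]_\nabla = 1$. Your expanded bookkeeping (checking index sets match the conjecture's hypotheses, noting finiteness, observing that weights of $T(\lambda_1)$ are $\leq\lambda_1$) is exactly what the paper's terse phrasing leaves implicit.
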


\begin{proof}
Since $\lb \cdot,\cdot\rb$ only depends on the characters of the modules involved, we see that by Conjecture \ref{conjecture} we have $$\lb T(2(p^r-1)\rho - \lambda_0^*+p^r\lambda_1),\nabla(\mu)\rb = \sum_{\sigma\leq \lambda_1}a_{\mu}^r(\lambda_0 + p^r\sigma)[T(\lambda_1):\nabla(\sigma)]_{\nabla}$$

Further, the left hand side becomes $[T(2(p^r-1)\rho - \lambda_0^*+p^r\lambda_1):\nabla(\mu)]_{\nabla}$ by Proposition \ref{formproperty} and since $[T(\lambda_1):\nabla(\lambda_1)]_{\nabla} = 1$ the claim follows.
\end{proof}

We thus see that if we know $[T(2(p^r-1)\rho - \lambda_0^*+p^r\lambda_1):\nabla(\mu)]_{\nabla}$ for all $\lambda_0\in X_r$ and all $\lambda_1,\mu\in X_+$ then we can determine $a_{\nu}(\sigma)$ for all $\nu,\sigma\in X_+$, assuming Conjecture \ref{conjecture}.

It is interesting to note that if we fix $\lambda$ and $\mu$, then we can pick a natural number $m$ such that $\lambda,\mu\in X_m$ and then for all $r\geq m$ the multiplicities $[T(2(p^r-1)\rho - \lambda^*):\nabla(\mu)]_{\nabla}$ become the same, as they are just the composition multiplicity of $L(\lambda)$ in $\nabla(\mu)$.

\section{Composition Numbers for Costandard Modules}

Once we know $a_{\nu}(\sigma)$ for all $\nu,\sigma\in X_+$ we can determine $[\nabla(\lambda):L(\mu)]_G$ for all $\lambda,\mu\in X_+$ by noting that $$[\nabla(\lambda):L(\mu)]_G = \sum_{\sigma\in X_+,\, \sigma\leq \lambda}a_{\lambda}(\sigma)[\nabla^{(p,r)}(\sigma):L(\mu)]_G$$ so we need to be able to calculate $[\nabla^{(p,r)}(\sigma):L(\mu)]_G$ for all $\sigma,\mu\in X_+$.

This can be done recursively using the following.

\begin{prop}\label{compmultinpr}
If $\mu = \mu_0 + p^r\mu_1$ and $\sigma = \sigma_0 + p^r\sigma_1$ with $\mu_0,\sigma_0\in X_r$ and $\mu_1,\sigma_1\in X_+$ then $$[\nabla^{(p,r)}(\sigma):L(\mu)]_G = \begin{cases}[\nabla(\sigma_1):L(\mu_1)]_G & \mbox{if }\sigma_0 = \mu_0 \\ 0 & \mbox{else}\end{cases}$$
\end{prop}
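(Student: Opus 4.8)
The plan is to unwind the definition $\nabla^{(p,r)}(\sigma) = L(\sigma_0)\otimes\nabla(\sigma_1)^{(r)}$ and analyze its composition factors directly via Steinberg's tensor product theorem. First I would recall that $\nabla(\sigma_1)$ has a filtration whose composition factors are various $L(\tau)$ with $\tau\leq\sigma_1$, so that $[\nabla(\sigma_1)^{(r)}]$ is a $\mathbb{Z}_{\geq 0}$-combination of the $[L(\tau)^{(r)}]$. Tensoring with $L(\sigma_0)$, the module $L(\sigma_0)\otimes L(\tau)^{(r)}$ is, by Steinberg (applied in reverse, since $\sigma_0\in X_r$), exactly the simple module $L(\sigma_0 + p^r\tau)$. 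Hence $[\nabla^{(p,r)}(\sigma) : L(\sigma_0 + p^r\tau)]_G = [\nabla(\sigma_1):L(\tau)]_G$ for every $\tau\in X_+$, and all other composition factors of $\nabla^{(p,r)}(\sigma)$ vanish.

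Next I would observe that every dominant weight $\mu$ has a \emph{unique} expression $\mu = \mu_0 + p^r\mu_1$ with $\mu_0\in X_r$ and $\mu_1\in X_+$: uniqueness of the $p^r$-adic digits of each coordinate $\langle\mu,\alpha^\vee\rangle$ for $\alpha\in S$ forces $\mu_0$, hence $\mu_1$. Consequently $L(\mu) = L(\mu_0)\otimes L(\mu_1)^{(r)}$ occurs among the $L(\sigma_0 + p^r\tau)$ enumerated above precisely when $\mu_0 = \sigma_0$, in which case it corresponds to $\tau = \mu_1$. Combining with the multiplicity computation from the first paragraph gives exactly the claimed case distinction.

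The only genuine subtlety — and the step I would be most careful about — is the passage ``$[\nabla^{(p,r)}(\sigma):L(\mu)]_G = \sum_\tau [\nabla(\sigma_1):L(\tau)]_G\,[L(\sigma_0)\otimes L(\tau)^{(r)}:L(\mu)]_G$'', i.e.\ that composition multiplicities in $L(\sigma_0)\otimes\nabla(\sigma_1)^{(r)}$ can be read off by filtering $\nabla(\sigma_1)^{(r)}$ by its own composition factors and then tensoring termwise with $L(\sigma_0)$. This is legitimate because $[\,\cdot\,]$ on the Grothendieck group is additive on short exact sequences and $L(\sigma_0)\otimes(-)$ is exact, so a composition series of $\nabla(\sigma_1)^{(r)}$ yields, after applying $L(\sigma_0)\otimes(-)$, a filtration of $L(\sigma_0)\otimes\nabla(\sigma_1)^{(r)}$ whose sections are the $L(\sigma_0)\otimes L(\tau)^{(r)}$; since each of these is \emph{simple} by Steinberg, this filtration is already a composition series and the multiplicities add up as stated. (Note $\nabla(\sigma_1)^{(r)}$ and its composition factors $L(\tau)^{(r)}$ make sense and remain a costandard module resp.\ simple modules for $G$ via $F^r$, since Frobenius twist is exact and preserves simplicity; it is the twisting that lets us apply Steinberg.) I would then just remark that all $\tau$ occurring satisfy $\tau\leq\sigma_1$, so the recursion in the displayed formula preceding the proposition refers only to costandard modules of strictly smaller (indeed much smaller) highest weight, as promised.
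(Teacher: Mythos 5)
Your argument is correct and is essentially the paper's own proof: twist a composition series of $\nabla(\sigma_1)$ by $F^r$, tensor with $L(\sigma_0)$, and invoke Steinberg's tensor product theorem to see the resulting filtration is already a composition series with the stated factors. You simply spell out the details (exactness of $L(\sigma_0)\otimes(-)$, simplicity of the sections, uniqueness of the decomposition of $\mu$) that the paper leaves implicit.
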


\begin{proof}
By Steinberg's tensor product theorem, if we take a composition series for $\nabla(\sigma_1)^{(r)}$ and tensor this with $L(\sigma_0)$ then we get a composition series for $L(\sigma_0)\otimes\nabla(\sigma_1)^{(r)} = \nabla^{(p,r)}(\sigma)$.

Since a composition series for $\nabla(\sigma_1)^{(r)}$ is obtained by applying $M\mapsto M^{(r)}$ to one for $\nabla(\sigma_1)$ the result follows.
\end{proof}

Note that in order to compute $[\nabla(\lambda):L(\mu)]_G$ we thus need to know $[\nabla(\sigma_1):L(\mu_1)]_G$ for all those $\sigma_1,\mu_1$ that can occur when we write $\sigma = \sigma_0 + p^r\sigma_1$ and $\mu = \mu_0 + p^r\mu_1$ with $\sigma_0,\mu_0\in X_r$ in the above sum. But these weights are smaller than both $\lambda$ and $\mu$ (in fact, much smaller, as we have divided by $p^r$), so we can repeat this until the weights occurring are small enough that all the costandard modules involved are simple, in which case the calculations can be done.

\section{Characters of Simple Modules}

Once we have the composition multiplicities in the costandard modules, we can obtain the characters of the simple modules using that we know the characters of the costandard modules and the fact that the change-of-basis matrix between the bases given by the characters of these modules is upper triangular unipotent. More precisely, we use that $$[L(\lambda)] = [\nabla(\lambda)] - \sum_{\mu < \lambda}[\nabla(\lambda):L(\mu)]_G[L(\mu)]$$ so once we know all the composition multiplicities in $\nabla(\lambda)$ and we have written the characters of all $L(\mu)$ with $\mu < \lambda$ as linear combinations of the characters of suitable $\nabla(\nu)$, then the above shows how to write the character of $L(\lambda)$ in the same form.

This means that in order to find the characters of all $L(\lambda)$ for $\lambda\in \Gamma$ for some $\Gamma\subseteq X_+$ we need to find the composition multiplicities of all simples in all $\nabla(\mu)$ for all $\mu$ such that $\mu\leq\lambda$ for some $\lambda\in \Gamma$, so we may as well pick a set $\Gamma$ which is saturated to begin with.

Further, by Stenberg's tensor product theorem, it suffices to find the characters of all $L(\lambda)$ for $\lambda\in X_1$, so we need a saturated set containing $X_1$. A natural choice for such a set is the set $\Gamma_1 = \{\lambda\in X_+\mid \langle\lambda,\alpha_0^{\vee}\rangle \leq (p-1)(h-1)\}$.

However, it may in some cases be useful to consider larger sets, as the effect of Steinberg's tensor product theorem is not obvious when the characters are written as linear combinations of the characters of the costandard modules.

\section{Choosing a suitable $r$}

Given $\lambda\in X_+$ we can choose $r$ large enough such that $\lambda\in X_r$, and if we do this, then $\nabla^{(p,r)}(\lambda) = L(\lambda)$ and if we apply Proposition \ref{prcoeff} we only get a single term, namely the good filtration multiplicity in the given tilting module.

As such, if we really do know the characters of all tilting modules equally well, regardless of how large the highest weights are, we can simply choose $r$ to be sufficiently large that all the weights we care about become $r$-restricted, in which case the algorithm becomes very simple.

For example, if one wants to consider all weights in $\Gamma_1$ as in the previous section, one will need to pick an $r$ such that $(p-1)(h-1) < p^r$.

However, the current description of the characters of tilting modules, as given in \cite{amrw17}, is highly recursive in nature if one wants to do computations. It may therefore be an advantage to limit the highest weights of the tilting modules one considers.

To limit this as much as possible, one can choose $r=1$, which transfers as much complexity as possible to this algorithm, rather than to the computations with tilting modules.

\section{Summary of the Algorithm}

In summary, to compute the character of $L(\mu)$ for a given $\mu\in X_+$ we do the following:

First, pick a suitable $r$, by picking $r$ as large as possible while still being able to efficiently compute the good filtration multiplicities in tilting modules of the form $T(2(p^r-1)\rho - \lambda_0^* + p^r\lambda_1)$ whenever $\lambda\leq\mu$ is written as $\lambda_0 + p^r\lambda_1$ with $\lambda_0\in X_r$ and $\lambda_1\in X_+$.

Next, find $a_{\mu}^r(\lambda)$ for all $\mu\leq\lambda$ using Proposition \ref{prcoeff} recursively.

For each $\lambda$ such that $a_{\mu}^r(\lambda)\neq 0$ compute the composition multiplicities $[\nabla^{(p,r)}(\lambda):L(\nu)]_G$ for all $\nu\leq \lambda$ using Proposition \ref{compmultinpr}. As part of this, one will need to also compute various composition multiplicities in costandard modules with smaller highest weights, which is done by repeating the algorithm starting with those weights, since in the next step, these are found for $\nabla(\mu)$.

Putting the above two steps together, one has found the composition multiplicities in $\nabla(\mu)$, allowing one to write $[L(\mu)] = [\nabla(\mu)] - \sum_{\lambda < \mu}b_{\mu}(\lambda)[L(\lambda)]$ for known constants $b_{\mu}(\lambda)$. Repeating the above for all the weights $\lambda$ with $b_{\mu}(\lambda)\neq 0$ will, once all the weights become small enough that that simple modules occurring are costandard modules, give $[L(\mu)]$ as a linear combination of costandard modules with known coefficients.

\bibliography{bibtex-full}

\def\cprime{$'$}
\begin{thebibliography}{AMRW17}

\bibitem[AMRW17]{amrw17}
Pramod Achar, Shotaro Makisumi, Simon Riche, and Geordie Williamson.
\newblock Koszul duality for kac-moody groups and characters of tilting
  modules.
\newblock {\em arXiv:1706.00183 [math.RT]}, 2017.

\bibitem[Don93]{donkin93}
Stephen Donkin.
\newblock On tilting modules for algebraic groups.
\newblock {\em Math. Z.}, 212(1):39--60, 1993.

\bibitem[Jan03]{rags}
Jens~Carsten Jantzen.
\newblock {\em Representations of algebraic groups}, volume 107 of {\em
  Mathematical Surveys and Monographs}.
\newblock American Mathematical Society, Providence, RI, second edition, 2003.

\bibitem[Kil17]{kildetoft17}
Tobias Kildetoft.
\newblock Decomposition of {T}ensor {P}roducts {I}nvolving a {S}teinberg
  {M}odule.
\newblock {\em Algebr. Represent. Theory}, 20(4):951--975, 2017.

\bibitem[Ste63]{steinberg63}
Robert Steinberg.
\newblock Representations of algebraic groups.
\newblock {\em Nagoya Math. J.}, 22:33--56, 1963.

\end{thebibliography}
\bibliographystyle{alpha}


\end{document}